\newcommand*{\spb}{\begin{split}}
\newcommand*{\spe}{\end{split}}
\newcommand*{\al}{\alpha}
\newcommand{\abs}[1]{\left| #1 \right|}
\newcommand{\R}{\mathbb{R}}
\newcommand{\pr}{\mathbf{P}}
\newcommand{\ex}{\mathbf{E}}
\newcommand{\N}{\mathbb{N}}
\newcommand{\ind}{\mathbf{1}}
\newcommand{\fal}{{\frac{1}{\al}}}
\newtheorem{theorem}{Theorem}[section]
\newtheorem{corollary}{Corollary}[section]
\newenvironment{proof}{\par\noindent {\bf Proof.}}
{\begin{flushright} \vspace*{-6mm}\mbox{$\Box$} \end{flushright}}
\DeclarePairedDelimiterX\MeijerM[3]{\lparen}{\rparen}%
{#3\delimsize\vert\,\begin{smallmatrix}#1 \\ #2\end{smallmatrix}}
\newcommand\MeijerG[8][]{%
  G^{\,#2,#3}_{#4,#5}\MeijerM[#1]{#6}{#7}{#8}}
\newcommand\MeijerG*[7]{%
  G^{\,#1,#2}_{#3,#4}\MeijerM*{#5}{#6}{#7}}
\newcommand*{\eqb}{\begin{equation}}
\newcommand*{\eqe}{\end{equation}}
\title{Densities of L\'evy walks and the corresponding fractional equations}
 \author{\normalsize Marcin Magdziarz $^1$, Tomasz Zorawik $^1$}
\date{}
\begin{document}
 \maketitle

 \begin{abstract}

In this paper we derive explicit formulas for the densities of L\'evy walks.
Our results cover both jump-first and wait-first scenarios.
The obtained densities solve certain fractional differential
equations involving fractional material derivative operators.
In the particular case, when the stability index is rational,
the densities can be represented as an integral of Meijer G function.
This allows to efficiently evaluate them numerically.
Our results show perfect agreement with the Monte Carlo simulations.

\end{abstract}

\bigskip

\section{Introduction}\label{intro}
The Continuous Time Random Walk (CTRW) is a stochastic process determined uniquely by i.i.d.\ random variables
${J_1}$, ${J_2},\dots$ representing consecutive jumps of the random walker,
and i.i.d. positive random variables $T_1,T_2,\dots$ representing waiting times between jumps, \cite{Sokolov_book}.
The trajectories of CTRW are step functions with intervals $T_i$ and jumps $J_i$.

\emph{L\'evy walk} is a particular case of CTRW satisfying the following additional condition $|J_i| = T_i$ for every $i\in\mathbb{N}$ (length of the jump equal to the length of the preceding waiting time).
L\'evy walks were defined for the first time in \cite{Klafter1,Klafter2} in the framework
of generalized master equations. Since then, they became
one of the most popular models in statistical physics with large number of important applications.
L\'evy walks have been found to be excellent models in the description of various real-life phenomena and complex anomalous systems. The most striking examples are: transport of light in optical materials \cite{Wiersma},
foraging patterns of animals \cite{Bell,Berg,Buchanan}, epidemic spreading \cite{Brockmann2,Dybiec},
human travel \cite{Brockmann,Gonzales}, blinking nano-crystals \cite{Margolin}, and
fluid flow in a rotating annulus \cite{Solomon}.

The main idea underlying the L\'evy walk is the spatial-temporal
coupling, which is manifested by the condition $|J_i| = T_i$.
Thus, even if we assume that the distribution of jumps $Y_i$ is
heavy-tailed with diverging moments, the L\'evy walk itself has
finite moments of all orders (intuitively, long jumps are
penalized by requiring more time to be performed). This is very
different from $\alpha$-stable L\'evy processes with $\alpha<2$,
which have infinite second moment. These desirable properties make
L\'evy walk particularly attractive for physical applications.

Let us now recall the formal definition of L\'evy walk.
Let $T_i$, $i=1,2,\ldots$, be the sequence of i.i.d.
positive random variables, representing the waiting times of the walker. Assume that they belong to the domain of attraction of one-sided $\al$-stable law, i.e.
\begin{equation} \label{assumption_1}
n^{-1/\al}\sum_{i=1}^{[nt]}T_i \stackrel{d}{\longrightarrow} S_\al(t)
\end{equation}
as $n\rightarrow\infty$. Here, $S_\al(t)$ is the $\al$-stable subordinator
with the Laplace transform \cite{JW94}
\[
\mathbb{E}(\exp\{-sS_\al(t)\})=\exp\{-t s^\al\}, \;\;\; 0<\al<1.
\]
Denote by
\[
N(t)=\max\{k\geq 0:\sum_{i=1}^k T_i \leq t\}
\]
the corresponding counting process. Next,
define the jumps of the walker as
\eqb
\label{Jumps}
J_i= I_i T_i,
\eqe
where $I_1,I_2,\ldots$ are i.i.d. random variables,
which are assumed independent of the sequence of waiting times $T_i$.
Each $I_i$ governs the direction of the jump, i.e.
\eqb
\label{Direction}
\mathbb{P}(I_i=1)=p\;,\;\;\;\;\;\mathbb{P}(I_i=-1)=1-p,
\eqe
$0\leq p\leq1$. Thus, the walker jumps up with probability $p$ and down
with probability $1-p$. Clearly, the condition $|J_i| = T_i$ is satisfied.
The sequence of jumps $J_i$ defined above belongs to the domain of attraction
of $\al$-stable distribution \cite{MSZ}
\[
n^{-1/\al}\sum_{i=1}^{[nt]}J_i \stackrel{d}{\longrightarrow} L_\al(t)
\]
as $n\rightarrow\infty$. The process $L_\al(t)$ is the $\al$-stable L\'evy motion
with the corresponding Fourier transform
\begin{eqnarray}
&&\mathbb{E}(\exp\{ikL_\al(t) \})  \label{char_fun_L} \\
\nonumber &&=\exp\{-t|k|^\al \cos(\pi \al /2)(1-i(2p-1)\tan(\pi \al /2 )sgn(k))\}.
\end{eqnarray}
Later on we will also use the corresponding left-limit process
\eqb
\label{left_limit}
L^-_\al(t)\stackrel{def}{=}\lim_{s\nearrow t}L_\al(s).
\eqe
Finally, the process
\[
R(t)=\sum_{i=1}^{N(t)}J_i
\]
is called L\'evy walk. $R(t)$ is also known as \emph{wait-first L\'evy walk} in the literature \cite{LW_review},
since the walker at the beginning of its motion ($t=0$) first waits and then performs the jump.

As shown in \cite{MSZ}, $R(t)$ obeys the following scaling limit
in distribution \eqb \label{limit1}
\frac{R(nt)}{n}\stackrel{d}{\longrightarrow} X(t) \eqe as
$n\rightarrow\infty$. Here $X(t)$ is the right-continuous version
of the process $L^-_\al (S^{-1}_\al(t))$. Moreover,
$S^{-1}_\al(t)$ is the inverse of $S_\al(t)$, i.e.
$S_\al^{-1}(t)=\inf\{\tau\geq 0:S_\al(\tau)>t\}$. Additionally,
the instants of jumps as well as the respective jump lengths of
the processes $L_\al(t)$ and $S_\al(t)$ are exactly the same.
Also, the probability density function (PDF) $p_t(x)$ of $X(t)$
satisfies the following fractional equation \cite{Jurlewicz,MSZ}
\eqb \label{FDE1} \left [ p\left( \frac{\partial}{\partial t} -
\frac{\partial}{\partial x} \right)^\al + (1-p)\left(
\frac{\partial}{\partial t} + \frac{\partial}{\partial x}
\right)^\al\right] p_t(x) =
\delta_0(x)\frac{t^{-\al}}{\Gamma(1-\al)}. \eqe Here, the
operators $\left( \frac{\partial}{\partial t} \mp
\frac{\partial}{\partial y} \right)^\al$ are the fractional
material derivatives introduced in \cite{Sokolov_Metzler}. In the
Fourier-Laplace space they are given by
\[
\mathcal{F}_y\mathcal{L}_t\left\{\left( \frac{\partial}{\partial t} \mp \frac{\partial}{\partial y} \right)^\al f(y,t)\right\}=(s\mp ik)^\al f(k,s).
\]
In what follows, we will find the explicit solution of \eqref{FDE1} by
determining the PDF $p_t(x)$ of the limit process $X(t)$ given in \eqref{limit1}.

We will also consider the so-called \emph{jump-first L\'evy walk} \cite{MTZ}
\[
\widetilde{R}(t)=\sum_{i=1}^{N(t)+1}J_i .
\]
It has the following scaling limit in distribution \cite{MT,MTZ}
\eqb \label{limit2}
\frac{\widetilde{R}(nt)}{n}\stackrel{d}{\longrightarrow}
Y(t)=L_\al(S^{-1}_\al(t)) \eqe as $n\rightarrow\infty$. Moreover,
the PDF $w_t(y)$ of $Y(t)$ satisfies the fractional equation of
the form \cite{Jurlewicz,MT} \eqb \label{FDE2} \left [ p\left(
\frac{\partial}{\partial t} - \frac{\partial}{\partial y}
\right)^\al + (1-p)\left( \frac{\partial}{\partial t} +
\frac{\partial}{\partial y} \right)^\al\right] w_t(y) =
\frac{\al}{\Gamma(1-\al)}\int_t^\infty \delta_0 (y-u)u^{-\al-1}
du. \eqe

In the next two sections we will derive explicit formulas for the
PDFs of the limit processes $X(t)$ and $Y(t)$, respectively. This
way we will obtain solutions of fractional equations \eqref{FDE1}
and \eqref{FDE2}.

\section{Densities of wait-first L\'evy walks}
Let us start with the wait-first scenario. In the result below we
determine the PDF of the limit processes $X(t)$ from eq.
\eqref{limit1}.
\begin{theorem}\label{T1}
If $p\in(0,1)$ and $\al \in (0,1)$ then the PDF $p_t(x)$ of the process $X(t)=L_\al^-(S_\al^{-1}(t))$ equals
\eqb
\label{T1_thesis}
\begin{split}
p_t(x)&=\frac{1}{2p^\fal(1-p)^\fal}\frac{\al}{\Gamma (1-\al)}\\
&\quad\quad\int_{|x|}^t\int_0^\infty \frac{z^{1-\al}}{(t-w)^\al}r_\al\left(\frac{w+x}{2p^{\frac{1}{\al}}}z\right)r_\al\left(\frac{w-x}{2(1-p)^{\frac{1}{\al}}}z\right)dzdw \ind_{(0,t)}(|x|),
\end{split}
\eqe
where $r_\al(x)$ is a density of a positive $\al$-stable random variable $Z_\al$ with the Laplace transform
\eqb
\ex e^{-uZ_\al}=e^{-u^\al}.
\eqe
\end{theorem}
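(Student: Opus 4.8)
The plan is to realize $X(t)$ through the natural decomposition of the coupled pair $(L_\al,S_\al)$ into its positively and negatively directed parts, and then to read off the density of $L_\al^-$ at the inverse time $S_\al^{-1}(t)$ by a first-passage computation. Since each jump of $S_\al$ is independently tagged $+$ with probability $p$ and $-$ with probability $1-p$ through the directions $I_i$, the coloring theorem for the Poisson point process of jumps shows that $S_\al=S^++S^-$ and $L_\al=S^+-S^-$, where $S^+$ and $S^-$ are \emph{independent} $\al$-stable subordinators with $S^+(z)\stackrel{d}{=}S_\al(pz)$ and $S^-(z)\stackrel{d}{=}S_\al((1-p)z)$; one checks that $\ex e^{ikL_\al(t)}=\ex e^{ikS^+(t)}\,\ex e^{-ikS^-(t)}$ reproduces \eqref{char_fun_L}. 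By the stable scaling $S_\al(c)\stackrel{d}{=}c^{\fal}Z_\al$, the variables $S^+(z)$ and $S^-(z)$ have densities $(pz)^{-\fal}r_\al\big((pz)^{-\fal}u\big)$ and $((1-p)z)^{-\fal}r_\al\big(((1-p)z)^{-\fal}v\big)$.

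Writing $\tau=S_\al^{-1}(t)$, the first passage of $S_\al$ over level $t$ occurs at a jump, and since the left limit $L_\al^-(\tau)$ discards that jump, its value is $X(t)=S^+(\tau^-)-S^-(\tau^-)$ irrespective of the direction of the crossing jump. The core step is therefore to find the joint law of $\big(S^+(\tau^-),S^-(\tau^-)\big)$. I would apply the compensation (master) formula to the Poisson point process of jumps of $S_\al$: the crossing jump is the unique jump for which the pre-jump level $w:=S^+_{z^-}+S^-_{z^-}$ lies below $t$ while the jump size exceeds $t-w$. This yields the joint density of $(\tau,S^+_{\tau^-},S^-_{\tau^-})$ at $(z,u,v)$ as the product of the two stable densities at internal time $z$ times the tail rate $\nu_\al\big((t-w,\infty)\big)=\frac{(t-w)^{-\al}}{\Gamma(1-\al)}$ of the combined L\'evy measure $\nu_\al(dr)=\frac{\al}{\Gamma(1-\al)}r^{-1-\al}dr$, restricted to $\{u+v<t\}$; monotonicity of the subordinator guarantees that conditioning on $w<t$ already rules out any earlier crossing. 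Integrating out the internal time $z$ gives the joint density $\rho(u,v)=\frac{(t-w)^{-\al}}{\Gamma(1-\al)}\int_0^\infty (pz)^{-\fal}r_\al\big((pz)^{-\fal}u\big)\,((1-p)z)^{-\fal}r_\al\big(((1-p)z)^{-\fal}v\big)\,dz$.

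It then remains to pass to the density of $X=S^+(\tau^-)-S^-(\tau^-)$. Changing variables to $w=u+v$, $x=u-v$ (Jacobian $\tfrac12$, so $u=\tfrac{w+x}{2}$, $v=\tfrac{w-x}{2}$, with $|x|<w<t$) produces $p_t(x)=\tfrac12\int_{|x|}^t\rho\big(\tfrac{w+x}{2},\tfrac{w-x}{2}\big)\,dw$, which already carries the indicator $\ind_{(0,t)}(|x|)$ and the outer limits of \eqref{T1_thesis}. Finally, the substitution $z\mapsto z^{-\al}$ in the inner integral sends the arguments $(pz)^{-\fal}u$ to $\frac{u}{p^{\fal}}z$ and $((1-p)z)^{-\fal}v$ to $\frac{v}{(1-p)^{\fal}}z$, converts the $z$-powers together with $\frac{1}{(t-w)^\al}$ into the stated $\frac{z^{1-\al}}{(t-w)^\al}$, and collects the constants into $\frac{1}{2p^{\fal}(1-p)^{\fal}}\frac{\al}{\Gamma(1-\al)}$, yielding \eqref{T1_thesis} exactly.

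I expect the only genuine difficulty to be the rigorous justification of the first-passage step: one must verify that the compensation formula selects precisely the crossing jump (using that $S_\al$ is a pure-jump subordinator that overshoots every level almost surely), and that the pre-jump values of the two independent components may be replaced by their values $S^\pm_z$ inside the resulting $dz\,\nu_\al(dr)$ integral, which is legitimate since the two representations of the jump measure agree for Lebesgue-a.e.\ $z$. Everything afterward — the scaling of the stable densities, the change of variables, and the substitution $z\mapsto z^{-\al}$ — is routine bookkeeping.
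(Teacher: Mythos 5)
Your proposal is correct and follows essentially the same route as the paper: the same decomposition of $(L_\al,S_\al)$ into two independent one-sided stable subordinators, the same identity expressing the law of the pre-crossing position at the first passage over level $t$ as the $0$-potential density times the L\'evy-measure tail $\frac{(t-w)^{-\al}}{\Gamma(1-\al)}$, and the same substitution $z\mapsto z^{-\al}$, with all constants and limits of integration checking out. The only difference is one of provenance: you derive the first-passage identity directly from the compensation formula for the jump point process, whereas the paper quotes it as Remark 4.2 of Meerschaert--Straka (the joint law of $X(t)$ and the age process $V(t-)$), so your version is somewhat more self-contained but rests on the same underlying fact.
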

%
%
\begin{proof}
We start with reminding the following equality of distributions (see \cite{MSZ}):
\eqb
\label{distributions_equality}
\begin{split}
&\left(L_\al(t),S_\al(t)\right) \\
& \quad\quad\stackrel{d}= \left(p^{\frac{1}{\al}} S_\al^{(1)}(t)-(1-p)^{\frac{1}{\al}}S_\al^{(2)}(t), p^{\frac{1}{\al}} S_\al^{(1)}(t)+(1-p)^{\frac{1}{\al}}S_\al^{(2)}(t)\right),
\end{split}
\eqe
where $S_\al^{(1)}(t)$ and $S_\al^{(2)}(t)$ are independent copies of $S_\al(t)$. Using this fact it was shown in \cite{MSZ} that the L\'evy measure $\nu_{\left(L_\al,S_\al\right)}$ of $(L_\al(t), S_\al(t))$ equals
\eqb
\label{levy_measure}
\nu(dx,ds)=p\delta_{s}(dx)\nu_{S_\al}(ds)+(1-p)\delta_{-s}(dx)\nu_{S_\al}(ds),
\eqe
where $\nu_{S_\al}(ds)=\frac{\al}{\Gamma(1-\al)}x^{-1-\al}ds$. Hence, an infinitesimal generator $A$ of the process $(L_\al(t), S_\al(t))$ equals
\eqb
\begin{split}
\label{generator}
Af(x,s)&=\int_{\R^2}\left[f(x+y,s+w)-f(x,s)\right]\nu_{(L_\al,S_\al)}(dy,dw)\\
&=\int_{\R^2}\left[f(x+y,s+w)-f(x,s)\right]\\
&\quad\quad\quad\quad\quad\quad\left(p\delta_{w}(dy)\nu_{S_\al}(dw)+(1-p)\delta_{-w}(dy)\nu_{S_\al}(dw)\right).
\end{split}
\eqe In \cite{MS} the authors consider (among other processes) the
2-dimensional process $\left(X(t),V(t-)\right)$, where  $V(t)$ is
the age process counting time that passed since the last jump of
$X(t)$. Equation \eqref{generator} together with Remark 4.2 in the
mentioned paper provides us with the joint distribution of
$(X(t),V(t-))$
\eqb \label{xv_distribution} \pr
\left(X(t)=dx,V(t-)=dv\right)=\nu_{(L_\al,S_\al)}(\R\times[v,\infty))U(dx,t-dv)\ind_{\{0\leq
v \leq t\}}, \eqe where $U(dx,ds)$ is the 0-potential measure of
the process $(L_\al(t),S_\al(t))$, defined as \eqb
U(dx,ds)=\int_0^\infty \pr \left(L_\al(u)=dx,S_\al(u)=ds\right)du.
\eqe The process $V(t)$ is beyond our interest, but we will later
obtain the distribution of $X(t)$ as a marginal distribution of
$(X(t),V(t-))$. This is the reason why we turn now our  attention
to Eq.\eqref{xv_distribution}. We have \eqb
\begin{split}
\label{levy_measure_calculated}
\nu_{(L_\al,S_\al)}(\R\times[v,\infty))&=p\nu_{S_\al}([v,\infty))+(1-p)\nu_{S_\al}([v,\infty))=\nu_{S_\al}([v,\infty))\\
&=\int_v^\infty \frac{\al}{\Gamma (1-\al)}x^{-1-\al}dx=\frac{v^{-\al}}{\Gamma (1-\al)}.
\end{split}
\eqe
Furthermore, a density $u(x,s)$ of the potential measure $U$ can be calculated as
\eqb
\label{potential_density_def}
u(x,s)= \int_0^\infty w_{u}(x,s)du,
\eqe
where $w_{t}(x,s)$ is the PDF of the process $(L_\al(t),S_\al(t))$. From Eq. \eqref{distributions_equality} we get
\eqb
\begin{split}
&\pr\left(L_\al(t)= dx, S_\al(t)=ds\right)
=
\pr\left(S_\al^{(1)}=\frac{ds+dx}{2p^{\frac{1}{\al}}}, S_\al^{(2)}=\frac{ds-dx}{2(1-p)^{\frac{1}{\al}}}\right).
\end{split}
\eqe
The Jacobian determinant in the above formula for the linear transformation of $(dx,ds)$ equals $\frac{1}{2p^{1/\al}(1-p)^{1/\al}}$. Taking into account the independence of $S_\al^{(1)}(t)$ and $S_\al^{(2)}(t)$ we express $w_t(x,s)$ in terms of $q_t(s)$ - the PDF of $S_\al(t)$:
\eqb
w_t(x,s)=\frac{1}{2p^{\frac{1}{\al}}(1-p)^{\frac{1}{\al}}}q_t\left(\frac{s+x}{2p^{\frac{1}{\al}}}\right)q_t\left(\frac{s-x}{2(1-p)^{\frac{1}{\al}}}\right).
\eqe
Moreover the self-similarity of $S_\al(t)$ provides us with the equation
\eqb
q_u(x)=\frac{1}{u^{\frac{1}{\al}}}q_1\left(\frac{x}{u^\frac{1}{\al}}\right)=\frac{1}{u^{\frac{1}{\al}}}r_\al\left(\frac{x}{u^\frac{1}{\al}}\right),
\eqe
here $r_\al(x)$ is the density of the random variable $S_\al(1)$. We take into account the two above equations in Eq.\eqref{potential_density_def} and substitute $u=z^{-\al}$ to calculate the integral:
\eqb
\begin{split}
\label{potential_density2}
&u(x,s)=\int_0^\infty \frac{1}{2p^{\frac{1}{\al}}(1-p)^{\frac{1}{\al}}} q_u\left(\frac{s+x}{2p^{\frac{1}{\al}}}\right)q_u\left(\frac{s-x}{2(1-p)^{\frac{1}{\al}}}\right)du\\
&=\int_0^\infty \frac{1}{2p^{\frac{1}{\al}}(1-p)^{\frac{1}{\al}}}u^{\frac{-2}{\al}} r_\al\left(\frac{s+x}{2p^{\frac{1}{\al}}}u^{\frac{-1}{\al}}\right)r_\al\left(\frac{s-x}{2(1-p)^{\frac{1}{\al}}}u^{\frac{-1}{\al}}\right)du\\
&=\int_0^\infty \frac{\al}{2p^{\frac{1}{\al}}(1-p)^{\frac{1}{\al}}}z^{1-\al} r_\al\left(\frac{s+x}{2p^{\frac{1}{\al}}}z\right)r_\al\left(\frac{s-x}{2(1-p)^{\frac{1}{\al}}}z\right)dz.
\end{split}
\eqe
Finally we combine eqs. \eqref{xv_distribution}, \eqref{levy_measure_calculated}, \eqref{potential_density2} and integrate with respect to $dv$ to obtain
\eqb
\begin{split}
p_t(x)
%
%
&=\frac{\al}{\Gamma (1-\al)}\frac{1}{2p^\fal(1-p)^\fal}\\
&\quad\quad\quad\int_0^{t-|x|}\int_0^\infty v^{-\al}z^{1-\al} r_\al\left(\frac{t-v+x}{2p^{\frac{1}{\al}}}z\right)r_\al\left(\frac{t-v-x}{2(1-p)^{\frac{1}{\al}}}z\right)dzdv\\
&=\frac{\al}{\Gamma (1-\al)}\frac{1}{2p^\fal(1-p)^\fal}\\
&\quad\quad\quad\int_{|x|}^t\int_0^\infty\frac{z^{1-\al}}{(t-w)^{\al}}r_\al\left(\frac{w+x}{2p^\fal}z\right)r_\al\left(\frac{w-x}{2(1-p)^\fal}z\right)dzdw
\end{split}
\eqe
We used here the fact that $r_\al(x)$ vanishes outside the positive half-line and substituted $v=t-w$.
\end{proof}
One should mention here that the PDF of wait-first L\'evy walk in the extreme cases $p=0$ and $p=1$ was already derived in \cite{MSZ}.

For the special case $\al=\frac{1}{2}$ we get the following simple expression for $p_t(x)$:
\begin{corollary}
\label{alpha05}
When $\al=\frac{1}{2}$ and $p\in(0,1)$ the PDF $p_t(x)$ can be expressed as
\eqb
\begin{split}
p_t(x)&=\frac{\sqrt2}{\pi}p(1-p) \\
&\quad\quad\frac{\left(t-|x|\right)^{\frac{1}{2}}}{\left(2p^2t+(1-2p)(t+x)\right)\left(2p^2|x|+(1-2p)(x+|x|)\right)^{\frac{1}{2}}}\ind_{(0,t)}(|x|)
\end{split}
\eqe
\end{corollary}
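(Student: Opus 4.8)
The plan is to substitute $\al = \frac{1}{2}$ directly into formula \eqref{T1_thesis} of Theorem \ref{T1} and evaluate the resulting double integral in closed form. The crucial simplification is that for $\al=\frac12$ the positive stable density is explicit: the Laplace transform $e^{-\sqrt{u}}$ corresponds to the L\'evy density $r_{1/2}(y)=\frac{1}{2\sqrt{\pi}}\,y^{-3/2}e^{-1/(4y)}$ for $y>0$. Since $\fal=2$ and $\Gamma(1-\al)=\Gamma(\frac12)=\sqrt{\pi}$, the prefactor and the powers of $z$ and $t-w$ all specialize to half-integer exponents, so the entire computation reduces to elementary integrals.

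First I would carry out the inner integral over $z$. Writing $a=\frac{w+x}{2p^2}$ and $b=\frac{w-x}{2(1-p)^2}$, the product $r_{1/2}(az)\,r_{1/2}(bz)$ is proportional to $z^{-3}\exp\{-\frac{1}{4z}(\frac1a+\frac1b)\}$, so the $z$-integral reduces to $\int_0^\infty z^{-5/2}e^{-c/z}\,dz$ with $c=\frac14(\frac1a+\frac1b)$. The substitution $u=1/z$ turns this into a Gamma integral $\Gamma(\frac32)\,c^{-3/2}$, and after collecting constants the whole inner integral collapses to the clean expression $\frac{1}{\sqrt{\pi}\,(a+b)^{3/2}}$. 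A short computation shows that $a+b$ is a linear function of $w$, namely $a+b=\frac{w(2p^2-2p+1)+x(1-2p)}{2p^2(1-p)^2}$.

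It then remains to evaluate the outer integral $\int_{|x|}^t (t-w)^{-1/2}(a+b)^{-3/2}\,dw$. Substituting $v=t-w$ and then $s=\sqrt{v}$ brings it to the elementary form $2\int_0^{\sqrt{t-|x|}}(G-Es^2)^{-3/2}\,ds$ with $E=2p^2-2p+1$ and $G=Et+x(1-2p)$, whose antiderivative is $\frac{s}{G\sqrt{G-Es^2}}$. Evaluating at the endpoints gives $\frac{2\sqrt{t-|x|}}{G\sqrt{G-E(t-|x|)}}$.

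The last step is bookkeeping: recognizing $G=2p^2t+(1-2p)(t+x)$ and $G-E(t-|x|)=2p^2|x|+(1-2p)(x+|x|)$, and checking that the accumulated constants reduce to $\frac{\sqrt{2}}{\pi}p(1-p)$, using $[2p^2(1-p)^2]^{1/2}=\sqrt{2}\,p(1-p)$. The indicator $\ind_{(0,t)}(|x|)$ is inherited directly from \eqref{T1_thesis}. I expect the only genuine obstacle to be this algebraic identification of the two denominator factors; the specialization of $r_{1/2}$ and the two standard integrals are routine.
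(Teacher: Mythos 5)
Your proposal is correct and follows essentially the same route as the paper: substitute the explicit form of $r_{1/2}$, evaluate the $z$-integral as a Gamma integral to get $\pi^{-1/2}(a+b)^{-3/2}$, and then integrate the resulting $(t-w)^{-1/2}(A+Bw)^{-3/2}$ in closed form (the paper guesses the antiderivative directly, while you reach the same primitive via the substitutions $v=t-w$, $s=\sqrt{v}$ --- a purely cosmetic difference). All the constants and the identification of the denominator factors check out against the paper's computation.
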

%
%
\begin{proof}
We have (see \cite{Sato})
\eqb
r_{1/2}(x)=\frac{1}{2\sqrt{\pi}}x^{\frac{-3}{2}}\exp{\left(\frac{-1}{4x}\right)}.
\eqe
Taking into account this formula and using the property of Gamma function $\Gamma(3/2)=\frac{1}{2}\Gamma(1/2)=\frac{\sqrt{\pi}}{2}$ we can calculate the following integral:
\eqb
\label{integral_potential}
\begin{split}
\int_0^\infty& z^{\frac{1}{2}}r_{1/2}\left(\frac{w+x}{2p^2}z\right)r_{1/2}\left(\frac{w-x}{2(1-p)^2}z\right)dz\\
&=\frac{1}{4\pi}\left(\frac{(w+x)(w-x)}{4p^2(1-p)^2}\right)^{\frac{-3}{2}}\\
&\quad\quad\quad\int_0^\infty z^{\frac{-5}{2}} \exp{\left(-\frac{p^2(w-x)+(1-p)^2(w+x)}{2(w+x)(w-x)}\frac{1}{z}\right)}dz\\
&=\frac{2^\frac{3}{2}}{\sqrt{\pi}}\frac{p^3(1-p)^3}{\left(p^2(w-x)+(1-p)^2(w+x)\right)^\frac{3}{2}}.
\end{split}
\eqe
%
%
%
%
%
Substituting Eq. \eqref{integral_potential} into Eq.\eqref{T1_thesis} we arrive at
\eqb
\begin{split}
p_t(x)&=\frac{1}{\sqrt2 \pi}p(1-p)\\
&\int_{|x|}^t(t-w)^{-\frac{1}{2}}\left(\left((1-p)^2+p^2\right)w+\left((1-p)^2-p^2\right)x\right)^{-\frac{3}{2}}dw\ind_{(0,t)}(|x|).
\end{split}\eqe
One can notice that (below $a,b$ and $c$ are constants)
\eqb
\frac{d}{dw}\left(\frac{-2(c-w)^{\frac{1}{2}}}{(a+bc)(a+bw)^{\frac{1}{2}}}\right)=(c-w)^{-\frac{1}{2}}(a+bw)^{-\frac{3}{2}},
\eqe
which finally implies
\eqb
\begin{split}
p_t(x)&=\frac{\sqrt2}{\pi}p(1-p)\\
&\quad\quad \frac{\left(t-|x|\right)^{\frac{1}{2}}}{\left(2p^2t+(1-2p)(t+x)\right)\left(2p^2|x|+(1-2p)(x+|x|)\right)^{\frac{1}{2}}}\ind_{(0,t)}(|x|).
\end{split}\eqe
\end{proof}
%
%
It is worth to mention here, that in a special case $\al=\frac{1}{2}$ and $p=\frac{1}{2}$ we recover the known result (\cite{MSZ})
\eqb
p_t(x)=\frac{1}{2\pi}\int_{|x|}^t(t-w)^{-\frac{1}{2}}w^{-\frac{3}{2}}dw\ind_{(0,t)}(|x|)=\frac{(t-|x|)^{\frac{1}{2}}}{\pi t{|x|}^\frac{1}{2}}\ind_{(0,t)}(|x|).
\eqe
Figures \ref{fig:alpha05_p01} and \ref{fig:alpha05_different_p} present the densities obtained from the above Corollary.

%
%
\begin{figure}[!ht]
\centering
\includegraphics[scale=0.34]{alpha05_p01_different_t.eps}
\caption{
Plot of PDF $p_t(x)$ of the process $L_\al^-(S_\al^{-1}(t))$ for different values of $t$, $\alpha=0.5$, $p=0.1$.}
\label{fig:alpha05_p01}
\end{figure}
\begin{figure}[!ht]
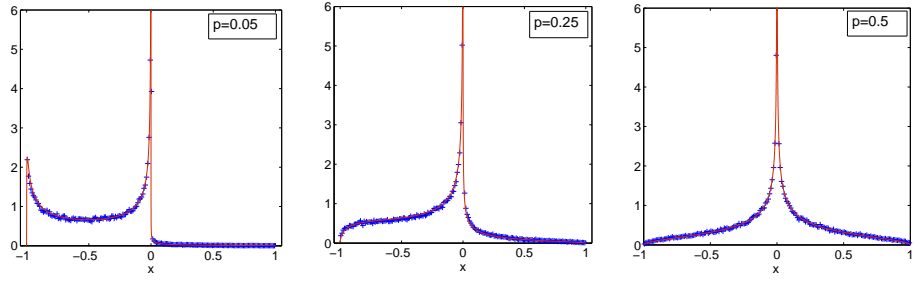

\centering
\includegraphics[scale=0.34]{alpha05_p005.eps}
\includegraphics[scale=0.34]{alpha05_p025vconst_alph.eps}
\includegraphics[scale=0.34]{alpha05_p05.eps}
\caption{
For $\al=0.5$ we compare the obtained densities $p_1(x)$ for different values of $p$ (red solid lines) with densities estimated using Monte Carlo methods (blue pluses).
}
\label{fig:alpha05_different_p}
\end{figure}
The drawback of using  Theorem \ref{T1_thesis} to calculate
$p_t(x)$ in the general case is the necessity of knowing values of
$r_\al(x)$. One can use some known algorithms to approximate
$r_\al(x)$, however results are not perfect. Computations are
time-consuming and inaccurate. They differ from those obtained via
Monte-Carlo methods. However, as the next Corollary shows, we can
express $p_t(x)$ in the form of an integral from Meijer G function
(see \cite{Prudnikov}). This special function is implemented in
most of numerical packages, including Mathematica and Matlab. This
representation is valid for rational $\al$ and can be used to
approximate irrational $\al$ with a rational one.
%
%
%
%
\begin{corollary}
\label{meijer_representation}
If $p\in(0,1)$ and $\al=\frac{l}{k}$, where $l,k\in \N$, then the PDF $p_t(x)$ of the process $X(t)=L_\al^-(S_\al^{-1}(t))$ equals
\eqb
\begin{split}
p_t(x)&=\frac{2^{1-\al-k+l}}{p\pi^{k-l}}\frac{k^2}{l^\al}\frac{\al}{\Gamma(1-\al)}
\int_{|x|}^t\Bigg[\frac{1}{(t-w)^\al}\frac{(w+x)^{\al-1}}{w-x}\\
&\quad\MeijerG*{k}{k}{l+k}{l+k}{-\Delta\left(k,k\left(\frac{\al}{l}-1\right)\right),\Delta(l,0)}{\Delta(k,0),-\Delta\left(l,l\left(\frac{\alpha}{l}-1\right)\right)}{\left(\frac{p}{1-p}\right)^\fal\left(\frac{w+x}{w-x}\right)^l}\Bigg]dw \ind_{(0,t)}(|x|),
\end{split}
\eqe
where $\MeijerG*{k_1}{k_2}{l_1}{l_2}{\Delta\left(k,a\right)}{\Delta(l,b)}{x}$ is the Meijer G function (see \cite{Prudnikov}) and
 $\Delta(k,a)=\frac{a}{k},\frac{a+1}{k},a+\frac{a+2}{k},...,\frac{a+k-1}{k}$ is a special list of $k$ elements.
\end{corollary}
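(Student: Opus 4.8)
The whole statement reduces to evaluating the inner integral of Theorem \ref{T1} in closed form. Writing
\[
K(a,b):=\int_0^\infty z^{1-\al}\,r_\al(az)\,r_\al(bz)\,dz,\qquad a=\frac{w+x}{2p^{\fal}},\quad b=\frac{w-x}{2(1-p)^{\fal}},
\]
Theorem \ref{T1} gives $p_t(x)=\frac{1}{2p^\fal(1-p)^\fal}\frac{\al}{\Gamma(1-\al)}\int_{|x|}^t (t-w)^{-\al}K(a,b)\,dw\,\ind_{(0,t)}(|x|)$, so it is enough to show that $K(a,b)$ equals the bracketed Meijer G expression once the $a,b$-dependent prefactors are peeled off. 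The plan is therefore to find a Meijer G representation of the single stable density $r_\al$ valid for rational $\al$, to substitute it twice into $K$, and to use the classical formula for the Mellin-type integral of a product of two Meijer G functions.

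First I would use the Mellin transform of the one-sided stable density,
\[
\int_0^\infty x^{s-1}r_\al(x)\,dx=\ex Z_\al^{\,s-1}=\frac{\Gamma\!\left(1-\frac{s-1}{\al}\right)}{\Gamma(2-s)},
\]
which is a ratio of two Gamma functions that are linear in $s$. Setting $\al=\frac{l}{k}$ and applying the Gauss--Legendre multiplication formula $\Gamma(nz)=(2\pi)^{(1-n)/2}n^{nz-1/2}\prod_{j=0}^{n-1}\Gamma\!\left(z+\frac{j}{n}\right)$ to break the numerator $\Gamma\!\left(1-\frac{k}{l}(s-1)\right)$ into a product of $k$ Gammas and the denominator $\Gamma(2-s)$ into a product of $l$ Gammas, the Mellin transform becomes precisely the quotient of Gamma-products that defines the Mellin--Barnes integrand of a $G^{k,0}_{l,k}$ function. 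Reading off the contour integral then expresses $r_{l/k}(x)$ as a constant (a power of $2\pi$ together with factors $k$ and $l$) times $x^{-1}$ times a $G^{k,0}_{l,k}$ function of $c\,x^{-l}$, with the two parameter lists given by the arithmetic progressions $\Delta(l,\cdot)$ and $\Delta(k,\cdot)$; this is the representation recorded in \cite{Prudnikov}.

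Next I would substitute these two representations of $r_\al(az)$ and $r_\al(bz)$ into $K(a,b)$, reduce the two $z^{-l}$-arguments to linear ones by the change of variables $t=z^{-l}$, and apply the standard formula for the integral of a power times a product of two Meijer G functions (see \cite{Prudnikov}). That formula evaluates such an integral to a single Meijer G function whose orders are the sums of the orders of the two factors, here producing exactly a $G^{k,k}_{l+k,l+k}$ function. The inversion of one of the two arguments that the formula performs is what turns half of the parameter lists into the shifted, negated progressions $-\Delta\!\left(k,k(\tfrac{\al}{l}-1)\right)$ and $-\Delta\!\left(l,l(\tfrac{\al}{l}-1)\right)$, while the weight $z^{1-\al}$ fixes their common shift; the argument emerges as the ratio of the two scale parameters and, after inserting $a$ and $b$, equals $\left(\frac{p}{1-p}\right)^{\fal}\left(\frac{w+x}{w-x}\right)^{l}$. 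Finally, collecting the constants from the two multiplication formulas and from the product-integral formula, together with the Jacobian factor $\frac{1}{2p^\fal(1-p)^\fal}$, the factor $\frac{\al}{\Gamma(1-\al)}$, and the leftover powers of $a$ and $b$, should reproduce the overall constant $\frac{2^{1-\al-k+l}}{p\pi^{k-l}}\frac{k^2}{l^\al}$ and the residual factor $\frac{(w+x)^{\al-1}}{w-x}$.

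I expect the main obstacle to be the bookkeeping rather than any conceptual difficulty: matching the exact $\Delta$-progressions and their shifts, tracking the sign flips caused by inverting one Meijer G argument, and, most delicately, checking that the powers of $p$ and $1-p$ redistribute so that all the $1-p$ dependence is absorbed into the Meijer G argument and the prefactor collapses to the stated $\frac{2^{1-\al-k+l}}{p\pi^{k-l}}\frac{k^2}{l^\al}$, with the $2\pi$ factors reducing to $\pi^{l-k}$. As a consistency check one can specialize to $\al=\tfrac12$, i.e.\ $l=1$, $k=2$, and verify that the resulting expression collapses to the elementary formula of Corollary \ref{alpha05}.
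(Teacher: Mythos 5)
Your proposal is correct and follows essentially the same route as the paper: substitute the Meijer G representation of $r_{l/k}$ (which the paper simply cites from Penson--Gorska rather than re-deriving via the Mellin transform and the Gauss multiplication formula as you sketch) into the inner $z$-integral of Theorem \ref{T1}, change variables $u=z^{-l}$, and apply the Prudnikov formula for the integral of a power times a product of two Meijer G functions to obtain the $G^{k,k}_{l+k,l+k}$ expression and the stated constants.
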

%
%
\begin{proof}
For $\al=l/k$ where $k$, $l$ are positive integers with $k>l$ we can express $r_{\al}(x)$ in terms of the Meijer G function (see \cite{Penson}):
\eqb
r_{l/k}(x)=\frac{\sqrt{kl}}{(2\pi)^{(k-l)/2}}\frac{1}{x}\MeijerG*{k}{0}{l}{k}{\Delta(l,0)}{\Delta(k,0)}{\frac{l^l}{k^k}x^{-l}}.
\eqe
This gives us
\eqb
\begin{split}
\label{meijer_integral}
&\int_0^\infty z^{1-\al} q_\al\left(\frac{w+x}{2p^{\frac{1}{\al}}}z\right)q_\al\left(\frac{w-x}{2(1-p)^{\frac{1}{\al}}}z\right)dz\\
&\;=\frac{kl}{(2\pi)^{(k-l)}}\frac{4(1-p)^{\frac{1}{\al}}p^{\frac{1}{\al}}}{(w-x)(w+x)}\int_0^\infty \left[z^{-1-\al}
\MeijerG*{k}{0}{l}{k}{\Delta(l,0)}{\Delta(k,0)}{\frac{l^l}{k^k}\left(\frac{w+x}{2p^{\frac{1}{\al}}}z\right)^{-l}}
\right.
\\
&\left.
\quad\quad\quad\quad\quad\quad\quad\quad\quad\quad\quad\quad\quad\quad\MeijerG*{k}{0}{l}{k}{\Delta(l,0)}{\Delta(k,0)}{\frac{l^l}{k^k}\left(\frac{w-x}{2(1-p)^{\frac{1}{\al}}}z\right)^{-l}}\right]dz\\
&\;=\frac{k}{(2\pi)^{(k-l)}}\frac{4(1-p)^{\frac{1}{\al}}p^{\frac{1}{\al}}}{(w-x)(w+x)}\int_0^\infty \left[u^{{\frac{\alpha}{l}}-1}
\MeijerG*{k}{0}{l}{k}{\Delta(l,0)}{\Delta(k,0)}{\frac{l^l}{k^k}\left(\frac{w+x}{2p^{\frac{1}{\al}}}\right)^{-l}u}
\right.
\\
&\left.
\quad\quad\quad\quad\quad\quad\quad\quad\quad\quad\quad\quad\quad\quad
\MeijerG*{k}{0}{l}{k}{\Delta(l,0)}{\Delta(k,0)}{\frac{l^l}{k^k}\left(\frac{w-x}{2(1-p)^{\frac{1}{\al}}}\right)^{-l}u}\right]du\\
&\quad=\frac{k}{(2\pi)^{(k-l)}}\frac{4(1-p)^{\frac{1}{\al}}p^{\frac{1}{\al}}}{(w-x)(w+x)}\left(\frac{l^l}{k^k}\right)^{-\al/l}\left(\frac{w+x}{2p^\fal}\right)^\alpha\\
&\quad\quad\quad\quad\quad
\MeijerG*{k}{k}{l+k}{l+k}{-\Delta\left(k,k\left(\frac{\al}{l}-1\right)\right),\Delta(l,0)}{\Delta(k,0),-\Delta\left(l,l\left(\frac{\alpha}{l}-1\right)\right)}{\left(\frac{p}{1-p}\right)^\fal\left(\frac{w+x}{w-x}\right)^l}.
\end{split}
\eqe
We substituted in the above equations $z^{-l}=u$. Moreover we used properties of Meijer G functions (see \cite{Prudnikov}) to calculate the integral from multiplication of these functions. To end the proof we substitute the calculated above integral into Eq. \eqref{T1_thesis}.
\end{proof}
%
%
Figure \ref{fig:different_alpha_p25} presents densities calculated here.
Another method for calculating $r_\al$, where is not necessarily rational, was presented in \cite{Saa}.
%
%
\begin{figure}[!ht]
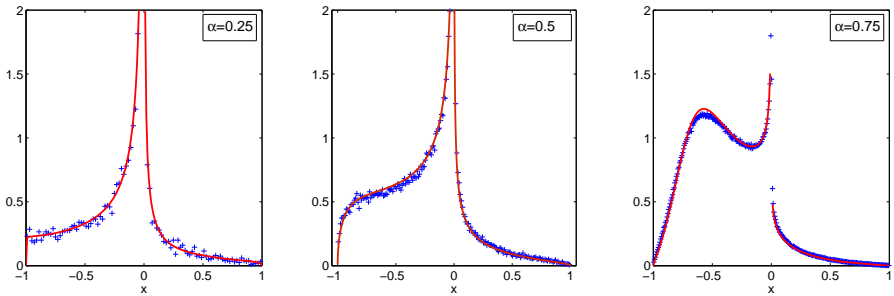

\centering
\includegraphics[scale=0.34]{alpha025_p025.eps}
\includegraphics[scale=0.34]{alpha05_p025.eps}
\includegraphics[scale=0.34]{alpha075_p025.eps}
\caption{ For $p=0.25$ we compare the densities $p_1(x)$ obtained
in Corollary \ref{meijer_representation} for different values of
$\al$ (red solid lines) with densities estimated using Monte Carlo
methods (blue pluses). } \label{fig:different_alpha_p25}
\end{figure}
%
%
%
%
\section{Densities of first-jump L\'evy walks}

This section is devoted to the first-jump scenario. Below we find
the PDF of the process $Y(t)$ from \eqref{limit2}. At the same
time this PDF solves the fractional equation \eqref{FDE2}.
\begin{theorem}\label{T2}
If $p\in(0,1)$ and $\al \in (0,1)$ then the PDF $w_t(y)$ of the
process $Y(t)=L_\al(S_\al^{-1}(t))$ equals:
\begin{itemize}
\item[(i)] if $|y|<t$, then
 \eqb \label{T2_thesis}
\begin{split}
w_t(y)&=\frac{\al^2 p^{1-\fal}}{2(1-p)^{\frac{1}{\al}}\Gamma (1-\al)}\int_{-t}^y
\int_{|x| \lor (t-y+x)}^t \int_0^\infty z^{1-\al}\\
&\quad\quad\quad\quad\quad\quad\quad  r_\al\left(\frac{w+x}{2p^{\frac{1}{\al}}}z\right)r_\al\left(\frac{w-x}{2(1-p)^{\frac{1}{\al}}}z\right)(y-x)^{-1-\al}dz
dwdx\\
&\;\;+\frac{\al^2 (1-p)^{1-\fal}}{2p^{\frac{1}{\al}}\Gamma (1-\al)}\int_{y}^t
\int_{|x| \lor (t+y-x)}^t\int_0^\infty z^{1-\al} \\
&\quad\quad\quad\quad\quad\quad\quad
r_\al\left(\frac{w+x}{2p^{\frac{1}{\al}}}z\right)r_\al\left(\frac{w-x}{2(1-p)^{\frac{1}{\al}}}z\right)(x-y)^{-1-\al}dz
dwdx,
\end{split}
\eqe

\item[(ii)] if $y\geq t$, then
\eqb \label{T2_thesis2}
\begin{split}
w_t(y)&=\frac{\al^2 p^{1-\fal}}{2(1-p)^{\frac{1}{\al}}\Gamma (1-\al)}\int_{-t}^t
\int_{|x|}^t \int_0^\infty z^{1-\al}\\
&\quad\quad\quad\quad\quad\quad\quad
r_\al\left(\frac{w+x}{2p^{\frac{1}{\al}}}z\right)r_\al\left(\frac{w-x}{2(1-p)^{\frac{1}{\al}}}z\right)(y-x)^{-1-\al}dz
dwdx,
\end{split}
\eqe

\item[(iii)] if $y\leq(-t)$, then
 \eqb \label{T2_thesis3}
\begin{split}
w_t(y)&=\frac{\al^2 (1-p)^{1-\fal}}{2p^{\frac{1}{\al}}\Gamma (1-\al)}\int_{-t}^t
\int_{|x|}^t \int_0^\infty z^{1-\al}\\
&\quad\quad\quad\quad\quad\quad\quad
r_\al\left(\frac{w+x}{2p^{\frac{1}{\al}}}z\right)r_\al\left(\frac{w-x}{2(1-p)^{\frac{1}{\al}}}z\right)(x-y)^{-1-\al}dz
dwdx.
\end{split}
\eqe
\end{itemize}
Here $r_\al(y)$ is a density of a positive $\al$-stable random variable $Z_\al$ with the Laplace transform
\eqb
\ex e^{-uZ_\al}=e^{-u^\al}
\eqe
and $a \lor b=\max{(a,b)}$
\end{theorem}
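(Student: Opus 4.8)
The plan is to mirror the proof of Theorem~\ref{T1}, but instead of discarding the current (incomplete) sojourn we retain it, since the jump-first walk $Y(t)=L_\al(S_\al^{-1}(t))$ includes the full jump performed at the first-passage epoch $S_\al^{-1}(t)$, whereas $X(t)=L_\al^-(S_\al^{-1}(t))$ used its left limit. Writing $W'$ for the total time-length of that current sojourn and $\sigma\in\{+1,-1\}$ for its direction, the spatial-temporal coupling $|J_i|=T_i$ forces the accompanying space jump to equal $\sigma W'$, so that $Y(t)=X(t)+\sigma W'$, where $X(t)$ is the position just before the jump.

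First I would refine the joint law \eqref{xv_distribution}. There the factor $\nu_{(L_\al,S_\al)}(\R\times[v,\infty))$ records only that the current sojourn exceeds the age $v=V(t-)$. Disintegrating this factor, i.e. replacing it by the full L\'evy measure $\nu_{(L_\al,S_\al)}(dy',dw')$ restricted to $\{w'>v\}$, yields the joint law of $X(t)$, the age $V(t-)$, and the current jump $(y',w')$. By the structure of $\nu$ in \eqref{levy_measure}, the current jump contributes $y'=+w'$ with weight $p\,\nu_{S_\al}(dw')$ and $y'=-w'$ with weight $(1-p)\,\nu_{S_\al}(dw')$; these are precisely the up- and down-moving sojourns.

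Next I would read off the law of $Y(t)=X(t)+y'$ by integrating out $v$ and $w'$. Splitting according to the sign of the current jump gives two terms. In the up-term the constraint $y=x+w'$ fixes $w'=y-x>0$ and turns $\nu_{S_\al}(dw')$ into the factor $\frac{\al}{\Gamma(1-\al)}(y-x)^{-1-\al}$; in the down-term $w'=x-y>0$ gives $(x-y)^{-1-\al}$. Substituting the potential density $u(x,w)$ from \eqref{potential_density2} (after setting $w=t-v$) reproduces the inner $\int_0^\infty z^{1-\al}r_\al(\cdot)r_\al(\cdot)\,dz$, and collecting the weight $p$, the factor $\frac{\al}{\Gamma(1-\al)}$ from $\nu_{S_\al}$, and the factor $\frac{\al}{2p^{1/\al}(1-p)^{1/\al}}$ from $u$ (using $p\,p^{-1/\al}=p^{1-\fal}$) yields exactly the constant $\frac{\al^2 p^{1-\fal}}{2(1-p)^{1/\al}\Gamma(1-\al)}$ of the up-term, and by symmetry the constant $\frac{\al^2(1-p)^{1-\fal}}{2p^{1/\al}\Gamma(1-\al)}$ of the down-term.

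Finally I would determine the integration regions, which is where the three cases arise. The support $|x|<w$ of $u(x,w)$ together with the "still in progress" constraint $w'>v=t-w$ translates, for the up-term, into $w>\max(|x|,\,t-y+x)$ and $x<y$, and symmetrically $w>\max(|x|,\,t+y-x)$, $x>y$ for the down-term; this produces the limits $\int_{|x|\lor(t-y+x)}^{t}$ and $\int_{|x|\lor(t+y-x)}^{t}$ of case~(i). When $y\ge t$ the down-term is empty (it would require $x>y\ge t$ against $|x|<t$) and the maximum collapses to $|x|$, giving case~(ii); the case $y\le -t$ is the mirror image, giving case~(iii). I expect the main obstacle to be the rigorous justification of the refined joint law of the first step — precisely, disintegrating the tail factor of \eqref{xv_distribution} into the differential current-jump law and establishing $Y(t)=X(t)+\sigma W'$ — which needs the extension of Remark~4.2 of \cite{MS} that tracks the overshooting jump rather than only the age; the ensuing region bookkeeping, while delicate because of the interaction of $\{w'>v\}$ with $\{|x|<w\}$, is then routine.
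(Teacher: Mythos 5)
Your proposal is correct and follows essentially the same route as the paper: the disintegration of the current (overshooting) sojourn that you describe is exactly the joint law of $(Y(t),R(t))$ given in Remark~4.2 of \cite{MS}, which the paper invokes directly as $\pr(Y(t)=dy,R(t)=dr)=\int\int U(dx,dw)\,\nu_{(L_\al,S_\al)}(dy-x,dr+t-w)$, and your constants, coupling $w'=|y-x|$, and region bookkeeping for the three cases all match the paper's computation. The one obstacle you flag --- rigorously justifying the refined joint law tracking the overshooting jump rather than only the age --- is therefore already supplied by the cited reference and requires no new argument.
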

%
%
\begin{proof}
From Remark 4.2 in \cite{MS} we get
\eqb
\label{yr_distribution}
\pr \left(Y(t)=dy,R(t)=dr\right)=\int_{x\in \R}\int_{w\in[0,t]}U(dx,dw)\nu_{L_\al,S_\al}(dy-x,dr+t-w)
\eqe
where $R(t)$ is a process that counts how long the process $Y(t)$ remains constant from the moment $t$. After applying Eq.\eqref{levy_measure} the above equation reads
\eqb
\label{yr_distribution_calculated}
\begin{split}
&\pr \left(Y(t)=dy,R(t)=dr\right)\\
&=\int_{x\in \R}\int_{w\in[0,t]}U(dx,dw)\Big[p\delta_{dr+t-w}(dy-x)\nu_{S_\al}(dr+t-w)\\
&\quad\quad\quad\quad\quad\quad\quad\quad\quad+(1-p)\delta_{-dr-t+w}(dy-x)\nu_{S_\al}(dr+t-w)\Big]
\end{split}
\eqe
 Now we can recover the distribution of $Y(t)$ as a marginal distribution. We calculate the density $w_t(y)$ applying Tonelli's theorem to change the order of integration:
\eqb
\begin{split}
w_t(y)&=\frac{\al}{\Gamma(1-\al)}\\
&\quad\quad\int_{r\in [0,\infty)}
\int_{x\in \R}\int_{w\in[0,t]}
u(x,w)\Big[p\delta_{r+t-w}(y-x)(dr+t-w)^{-1-\al}\\
&\quad\quad\quad\quad\quad\quad\quad+(1-p)\delta_{-dr-t+w}(y-x)(dr+t-w)^{-1-\al}\Big]dwdxdr\\
&=\frac{\al}{\Gamma(1-\al)}\int_{-t}^{t}\int_{|x|}^t u(x,w)\Big[p\ind_{(-\infty,y)}(x)\ind_{(t-y+x,\infty)}(w) (y-x)^{-1-\al}\\
&\quad\quad\quad\quad\quad\quad\quad+(1-p)\ind_{(y,\infty)}(x)\ind_{(t+y-x,\infty)}(w) (x-y)^{-1-\al}\Big]dwdx.
\end{split}
\eqe
The area of integration in the above integral depends on the relation of $y$ with $t$ - we have three cases. The first case is described by the condition $\abs{y}<t$. Then
\eqb
\begin{split}
w_t(y)&=\frac{p\al}{\Gamma(1-\al)}\int_{-t}^{y}\int_{|x|\lor(t-y+x)}^t u(x,w)(y-x)^{-1-\al}dwdx\\
&\quad\quad+\frac{(1-p)\al}{\Gamma(1-\al)}\int_{y}^{t}\int_{|x|\lor(t+y-x)}^t u(x,w)(x-y)^{-1-\al}dwdx.
\end{split}
\eqe
If $y\geq t$ then
\eqb
w_t(y)=\frac{p\al}{\Gamma(1-\al)}\int_{-t}^{y}\int_{|x|\lor(t-y+x)}^t u(x,w)(y-x)^{-1-\al}dwdx
\eqe
and finally if $y\leq (-t)$ then
\eqb
w_t(y)=\frac{(1-p)\al}{\Gamma(1-\al)}\int_{y}^{t}\int_{|x|\lor(t+y-x)}^t u(x,w)(x-y)^{-1-\al}dwdx.
\eqe
Substituting Eq.\eqref{potential_density2} for $u(x,w)$ in all three cases ends the proof.
\end{proof}

In the special case $\al=\frac{1}{2}$ we get the following simple expression for $w_t(y)$:
\begin{corollary}
\label{o_corollary}
When $\al=\frac{1}{2}$ the PDF $w_t(y)$ can be expressed as
\eqb
\begin{split}
w_t(y)=\frac{p(1-p)t^\frac{1}{2}}{\pi}\frac{ t\left((t - y)^\frac{1}{2}-(t + y)^\frac{1}{2}  \right) +
       y\left((t - y)^\frac{1}{2} + (t + y)^\frac{1}{2}\right)}{ y(t - y)^\frac{1}{2}
     (t + y)^\frac{1}{2}\left(\left(1 - 2p(1 - p)\right)t +(1-2p) y \right)}
\end{split}
\eqe
if $|y|<t$ and as
\eqb
w_t(y)=\frac{p}{\pi}\frac{t^\frac{1}{2}}{ y \left(p(y-t)^\frac{1}{2} + (1-p)  (y+t)^\frac{1}{2}\right) }
\eqe
if $y\geq t$ and as
\eqb
w_t(y)=\frac{p-1}{\pi}\frac{t^\frac{1}{2}}{ y \left(p(t-y)^\frac{1}{2} + (1-p)  (-y-t)^\frac{1}{2}\right) }
\eqe
if $y\leq (-t)$.
\end{corollary}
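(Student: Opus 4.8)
The plan is to specialize the three integral representations of Theorem~\ref{T2} to $\al=\tfrac12$ and then evaluate the resulting integrals in closed form, exactly as was done for the wait-first density in Corollary~\ref{alpha05}. The first step is to eliminate the innermost $z$-integral: inserting $r_{1/2}(x)=\frac{1}{2\sqrt\pi}x^{-3/2}e^{-1/(4x)}$ and reusing the computation \eqref{integral_potential} turns the potential density \eqref{potential_density2} into the explicit expression
\eqb
u(x,w)=\frac{p(1-p)}{\sqrt{2\pi}}\bigl(p^2(w-x)+(1-p)^2(w+x)\bigr)^{-3/2}.
\eqe
Substituting this into the formulas of Theorem~\ref{T2} reduces each of the three cases to a double integral over $w$ and $x$ of an elementary algebraic integrand carrying the extra factor $(y-x)^{-3/2}$ or $(x-y)^{-3/2}$.

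I would integrate in $w$ first. Writing the bracket as $bw+(1-2p)x$ with $b=p^2+(1-p)^2=1-2p(1-p)$, the $w$-integrand is simply $(bw+(1-2p)x)^{-3/2}$, whose primitive is $-\frac{2}{b}(bw+(1-2p)x)^{-1/2}$. Evaluating between the limits leaves, for each case, a single integral in $x$ whose integrand is a product of $(y-x)^{-3/2}$ (or $(x-y)^{-3/2}$) with a factor $(\gamma+\delta x)^{-1/2}$ that is linear in $x$. These remaining integrals are handled by the antiderivative identity
\eqb
\frac{d}{dx}\left[\frac{2}{\gamma+\delta y}\sqrt{\frac{\gamma+\delta x}{y-x}}\,\right]=(y-x)^{-3/2}(\gamma+\delta x)^{-1/2},
\eqe
which is the exact analogue of the one used at the end of the proof of Corollary~\ref{alpha05}. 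Notably, the contribution of the upper limit $w=t$ has $\gamma+\delta x=bt+(1-2p)x$, so that $\gamma+\delta y=(1-2p(1-p))t+(1-2p)y$ reproduces the denominator appearing in the claimed formula for $|y|<t$; this serves as a useful consistency check.

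The genuinely delicate part is the bookkeeping of the lower limit $|x|\lor(t-y+x)$ in case~(i). This maximum switches form at $x=(y-t)/2$, so the outer range $(-t,y)$ splits into a region where $w_0=|x|$ and one where $w_0=t-y+x$; on each piece the quantity $bw_0+(1-2p)x$ collapses to a single linear expression (for instance $bw_0+(1-2p)x=2p^2|x|$ when $w_0=|x|$ and $x<0$), so the identity above applies on each subinterval. The main obstacle I anticipate is assembling the boundary terms from these subintervals and showing that, after cancellation, the two summands of case~(i) fuse into the single compact expression stated in the corollary. I would lighten this by exploiting the reflection symmetry $x\mapsto -x$, $p\mapsto 1-p$, under which $u$ is invariant and which exchanges the two summands, so that only one needs to be computed directly. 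The exterior cases $y\ge t$ and $y\le -t$ are simpler: only one summand survives, the factor $(y-x)^{-3/2}$ has no singularity on the integration range $(-t,t)$, and the lower limit is the plain $|x|$, so a single pass of the $w$- and $x$-integrations followed by routine simplification yields the stated formulas.
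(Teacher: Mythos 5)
Your proposal follows essentially the same route as the paper's proof: specialize Theorem~\ref{T2} to $\al=\tfrac12$ using the closed form \eqref{integral_potential} for the $z$-integral, split the $x$-range at the kink of $|x|\lor(t\mp y\pm x)$ into elementary subregions, evaluate the resulting double integrals by explicit antiderivatives, and exploit the symmetry $w_t(y)_p=w_t(-y)_{1-p}$ to halve the work. Your explicit primitive in $x$ and the consistency check on the denominator $(1-2p(1-p))t+(1-2p)y$ are correct and in fact supply more detail than the paper's ``standard integration techniques.''
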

%
%
\begin{proof}
The density $w_t(y)$ is symmetric in a following sense:
\eqb
w_t(y)_p=w_t(-y)_{1-p},
\eqe
where $w_t(y)_c$ denotes the PDF of the process $Y(t)$ with parameter $p=c$.
Thus it is enough to calculate $w_t(y)$ for $y>0$ - we assume now $y>0$. There are two cases: $y<t$ and $y\geq t$. In the first case
from Theorem \ref{T2} and Eq. \eqref{integral_potential} we have
\eqb
\spb
w_t(y)
&=\frac{p^2(1-p)}{2^{3/2}\pi}\int_{-t}^y
\int_{|x|\lor (t-y+x)}^t \\
&\quad\quad\quad\quad\quad\quad\quad\left(p^2(w-y)+(1-p)^2(w+y)\right)^{-3/2}(y-x)^{-3/2}dwdx\\
&\;+\frac{p(1-p)^2}{2^{3/2}\pi}\int_{y}^t
\int_{|x|\lor (t+y-x)}^t \\
&\quad\quad\quad\quad\quad\quad\quad{\left(p^2(w-y)+(1-p)^2(w+y)\right)^{-3/2}}(x-y)^{-3/2}dwdx
\end{split}\eqe
Notice that in the above integrals
$|x|\lor (t-y+x)=t-y+x$
when $x\geq\frac{y-t}{2}$ and
$|x|\lor (t-y+x)=|x|=-x$
if $x\leq\frac{y-t}{2}$.
Similarly $|x|\lor(t+y-x)=t+y-x$ when $x\leq\frac{y+t}{2}$ and
$|x|\lor(t+y-x)=|x|=x$ if $x\geq\frac{y+t}{2}$. Thus $w_t(y)$ can
be expressed as a sum of four integrals, each of them having a
simple region of integration.
We can calculate all of them using standard integration techniques.
After tedious computations we finally get the desired result for $w_t(y)$ in case when $y\in (0,t)$. In the second case, that is $y\geq t$, from Theorem \ref{T2} and Eq. \eqref{integral_potential} we have
\eqb
\begin{split}
w_t(y)
&=\frac{p^2(1-p)^2}{2^{3/2}\pi}\\
&\quad\Bigg(\int_{-t}^0
\int_{-x}^t {\left(p^2(w-y)+(1-p)^2(w+y)\right)^{-3/2}}(y-x)^{-3/2}dwdx\\
&\quad+\int_{0}^t
\int_{x}^t {\left(p^2(w-y)+(1-p)^2(w+y)\right)^{-3/2}}(y-x)^{-3/2}dwdx\Bigg)
\end{split}
\eqe
Again, applying standard integration techniques we obtain the desired result for $w_t(y)$ when $y>t$.
\end{proof}
%
%
In a special case $\al=\frac{1}{2}$ and $p=\frac{1}{2}$ the PDF $w_t(y)$ has the form
\eqb
\begin{split}
w_t(y)=\frac{1}{2\pi}\frac{t^\frac{1}{2}\left(t\left((t + y)^\frac{1}{2}-(t - y)^\frac{1}{2}  \right) -
       y\left((t - y)^\frac{1}{2} + (t + y)^\frac{1}{2}\right)\right)}{ y(t - y)^\frac{1}{2}
     (t + y)^\frac{1}{2}t}
\end{split}
\eqe
if $|y|<t$ and
\eqb
w_t(y)=\frac{1}{\pi}\frac{t^\frac{1}{2}}{ |y| \left(|y-t|^\frac{1}{2} +   |y+t|^\frac{1}{2}\right) }
\eqe
if $|y|\geq t$.
Figure \ref{fig:o_alpha05_different_p} presents $w_1(y)$ for different values of $p$. Notice that in all cases we have a characteristic sharp peak at $y=t$ and $y=-t$. However when $p\rightarrow 0$ the left peak  gets bigger and the right one diminishes. The opposite situation appears when $p \rightarrow 1$.
%
%
\begin{figure}[H]
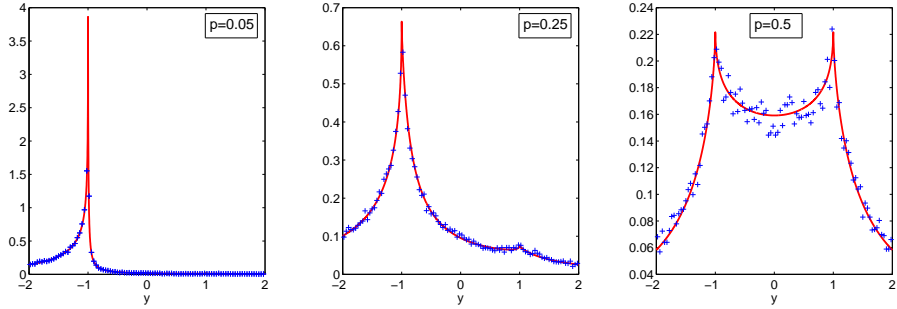

\centering
\includegraphics[scale=0.34]{o_alpha05_p005.eps}
\includegraphics[scale=0.34]{o_alpha05_p025.eps}
\includegraphics[scale=0.34]{o_alpha05_p05.eps}
\caption{ For $\al=0.5$ we compare the densities $w_1(y)$ obtained
in Corollary \ref{o_corollary} for different values of $p$ (red
solid lines) with densities estimated using Monte Carlo methods
(blue pluses). } \label{fig:o_alpha05_different_p}
\end{figure}
\begin{figure}[H]
\label{fig:o_alpha05_p05}
\centering
\includegraphics[scale=0.34]{o_alpha05_p05_different_t.eps}
\caption{
Plot of PDF $w_t(y)$ of the process $L_\al(S_\al^{-1}(t))$ for different values of $t$, $\alpha=0.5$, $p=0.5$.}
\end{figure}

\section*{Acknowledgements}
This research was partially supported by NCN Maestro grant no. 2012/06/A/ST1/00258.



 \bigskip \smallskip

 \it

 \noindent
$^1$
Hugo Steinhaus Center, \\
Department of Mathematics, \\
Wroclaw University of Technology, \\
Wyspianskiego 27, 50-370 Wroclaw, Poland. \\[4pt]
e-mail: marcin.magdziarz@pwr.edu.pl


\begin{thebibliography}{99}
\bibitem{Wiersma}
P. Barthelemy, P.J. Bertolotti, D.S. Wiersma, A L\'evy flight for light.
{\it Nature} \textbf{453} (2008), 495--498.

\bibitem{Bell}
W.J. Bell {\it Searching Behaviour}.
Chapman \& Hall, London (1999).

\bibitem{Berg}
H.C. Berg {\em Random Walks in Biology.}
Princeton University Press, Princeton (1983).

\bibitem{Brockmann2}
D. Brockmann, Human mobility and spatial disease dynamics.
Chap. 1, pp. 1--24 in Reviews of Nonlinear Dynamics and Complexity, Vol. 2, edited
by H. Shuster, Wiley-VCH Verlag GmbH \& Co. KGaA, Weinheim (2010).

\bibitem{Brockmann}
D. Brockmann, L. Hufnagel, T. Geisel, The scaling laws of human travel.
{\it Nature} \textbf{439} (2006), 462--465.


\bibitem{Buchanan}
M. Buchanan, Ecological modelling: The mathematical mirror to animal nature.
{\it Nature} \textbf{453} (2008), 714--716.

\bibitem{Dybiec}
B. Dybiec, Random strategies of contact tracking.
{\it Physica A} \textbf{387} (2008), 4863--4870.

\bibitem{Gonzales}
M.C. Gonzales, C.A. Hidalgo, A.L. Barab\'asi, Understanding individual human mobility patterns.
{\it Nature} \textbf{453} (2008), 779--782.

\bibitem{JW94}
A. Janicki, A. Weron, {\it Simulation and Chaotic Behavior of $\alpha$-Stable Stochatic
Processes}. Dekker, New York (1994).

\bibitem{Jurlewicz}
A. Jurlewicz, P. Kern, M.M. Meerschaert, H.P Scheffler, Fractional governing equations for coupled random walks.
\emph{Comput. Math. Appl.} \textbf{64} (2012), 3021–-3036.

\bibitem{Klafter2}
J. Klafter, A. Blumen, M.F. Shlesinger, Stochastic pathway to anomalous diffusion.
\emph{Phys. Rev. A} \textbf{35} (1987), 3081--3085.


\bibitem{Sokolov_book}
J. Klafter, I.M. Sokolov, {\it First Steps in Random Walks. From Tools to Applications.}
Oxford University Press, Oxford (2011).

\bibitem{MSZ}
M. Magdziarz, W. Szczotka, P. Zebrowski, Langevin Picture of L\'evy Walks and Their Extensions. \emph{ J. Stat. Phys.} \textbf{127}, (2012), 74--96.

\bibitem{MT}
M. Magdziarz , M. Teuerle, Asymptotic properties and numerical simulation of multidimensional Lévy walks.
\emph{Commun. Nonlinear Sci. Numer. Simul.} \textbf{20} (2015), 489--505.


\bibitem{MTZ}
M. Magdziarz , M. Teuerle, P. Zebrowski, Scaling limits of overshooting Levy walks, \emph{Acta Phys. Polon. B} \textbf{43} (2012), 1111--1132.

\bibitem{Margolin}
G. Margolin, E. Barkai, Nonergodicity of blinking nanocrystals and other L\'evy-walk processes.
{\it Phys. Rev. Lett.} \textbf{94} (2005), 080601.


\bibitem{MS}
M.M. Meerchaert, P. Straka: Semi-Markov approach to continuous time random walk limit processes. \emph{The Annals of Probability}, Vol. 42, No. 4 (2014), 1899--1723.

\bibitem{Penson}
K.A. Penson , K. Gorska, Exact and Explicit Probability Densities for One-Sided L\'evy Stable Distributions. \emph{Phys. Rev. Lett.} \textbf{105} (2010), 210604.

\bibitem{Prudnikov}
A.P. Prudnikov , Yu. A. Brychkov A., O. I. Marichev O.I.,
\emph{Integrals and Series}. Gordon and Breach, Amsterdam (1998).
%
\bibitem{Saa}
A. Saa, R. Venegeroles, Alternative numerical computation of one-sided Lévy and Mittag-Leffler distributions,
\emph{Phys.Rev.E} \textbf{84} (2011), 026702.
%
\bibitem{Sato}
K.I. Sato, \emph{L\'evy Processes and Infinitely Divisible Distributions}. Cambridge University Press, Cambridge (1999).

\bibitem{Klafter1}
M.F. Shlesinger, J. Klafter, Y.M Wong, Random walks with infinite spatial and temporal moments,
\emph{J. Stat. Phys.} \textbf{27} (1982), 499-512.

\bibitem{Sokolov_Metzler}
I.M. Sokolov, R. Metzler, Towards deterministic equations for L\'evy walks: The fractional material derivative.
\emph{Phys. Rev. E} \textbf{67} (2003), 010101(R).

\bibitem{Solomon}
T.H. Solomon, E.R. Weeks, H.L. Swinney, Observation of anomalous diffusion and L\'evy flights in a two-dimensional rotating flow. {\it Phys. Rev. Lett.} {\bf 71} (1993), 3975--3978.

\bibitem{LW_review}
V. Zaburdaev, S. Denisov, J. Klafter, L\'evy Walks. arXiv:1410.5100 (2014).


\end{thebibliography}
\end{document}